\def\lijntje{\vrule height2.4pt depth-2pt width0.5in}
\def\vlijntje{\vrule height0.45in depth0.4pt width0.4pt}
\def\vlijn{\buildrel {\hbox to 0pt{\hss$\textstyle\circ$\hss}}\over\vlijntje}
\def\dlijntje{{\vrule height2pt depth-1.6pt
width0.5in}\llap{\vrule height4pt depth-3.6pt width0.5in}}
\def\tlijntje{{\vrule height1.7pt depth-1.3pt
width0.5in}\llap{\vrule height3.0pt depth-2.6pt width0.5in}\llap{\vrule height4.3pt depth-3.9pt width0.5in}
}
\def\vtriple#1\over#2\over#3{\mathrel{\mathop{\kern0pt #2}\limits_{\hbox
to 0pt{\hss$#1$\hss}}^{\hbox to 0pt{\hss$#3$\hss}}}}
\def\rvtriple#1\over#2\over#3{\mathrel{\mathop{\kern0pt #2}\limits_{\hbox
to 0pt{\hss$#3$\hss}}^{\hbox to 0pt{\hss$#1$\hss}}}}
\def\Ct{\vtriple{\scriptstyle 2}\over\circ\over{}
\kern-1pt\lijntje\kern-1pt\vtriple{\scriptstyle 1}\over\circ\over{}
\kern-4pt{\dlijntje \kern -25pt<}\kern8pt
\vtriple{\scriptstyle 0}\over\circ\over{}\kern-1pt
}
\def\Bt{\vtriple{\scriptstyle 2}\over\circ\over{}
\kern-1pt\lijntje\kern-1pt\vtriple{\scriptstyle 1}\over\circ\over{}
\kern-4pt{\dlijntje \kern -25pt>}\kern8pt
\vtriple{\scriptstyle 0}\over\circ\over{}\kern-1pt}
\def\ddI{{\rm I}}
\newcommand{\C}{\mathbb C}
\def\Dm{\vtriple{\scriptstyle n+1}\over\circ\over{}\kern-1pt\lijntje\kern-1pt
\vtriple{\scriptstyle{n}}\over\circ\over{}
\cdots\cdots\vtriple{\scriptstyle 4}\over\circ\over{}\kern-1pt\lijntje\kern-1pt
\vtriple{\scriptstyle 3}\over\circ\over{\buildrel
{\scriptstyle 2}\over\vlijn}\kern-1pt\lijntje\kern-1pt
\vtriple{1}\over\circ\over{}\kern-1pt}
\def\Dn{\vtriple{\scriptstyle n}\over\circ\over{}\kern-1pt\lijntje\kern-1pt
\vtriple{\scriptstyle{n-1}}\over\circ\over{}
\cdots\cdots\vtriple{\scriptstyle 4}\over\circ\over{}\kern-1pt\lijntje\kern-1pt
\vtriple{\scriptstyle 3}\over\circ\over{\buildrel
{\scriptstyle 2}\over\vlijn}\kern-1pt\lijntje\kern-1pt
\vtriple{1}\over\circ\over{}\kern-1pt}
\def\En{\vtriple{\scriptstyle n}\over\circ\over{}\kern-1pt\lijntje\kern-1pt
\vtriple{\scriptstyle{n-1}}\over\circ\over{}
\cdots\cdots\vtriple{\scriptstyle 5}\over\circ\over{}\kern-1pt\lijntje\kern-1pt
\vtriple{\scriptstyle 4}\over\circ\over{\buildrel
{\scriptstyle 2}\over\vlijn}\kern-1pt\lijntje\kern-1pt
\vtriple{\scriptstyle 3}\over\circ\over{}\kern-1pt\lijntje\kern-1pt
\vtriple{\scriptstyle 1}\over\circ\over{}\kern-1pt}
\def\An{\vtriple{\scriptstyle n}\over\circ\over{}\kern-1pt\lijntje\kern-1pt
\vtriple{\scriptstyle{n-1}}\over\circ\over{}\kern-1pt\lijntje\kern-1pt
\vtriple{\scriptstyle n-2}\over\circ\over{}
\cdots\cdots
\vtriple{\scriptstyle 2}\over\circ\over{}\kern-1pt\lijntje\kern-1pt
\vtriple{\scriptstyle 1}\over\circ\over{}\kern-1pt}
\def\Cn{\vtriple{\scriptstyle n-1}\over\circ\over{}
\kern-1pt\lijntje\kern-1pt\vtriple{\scriptstyle{n-2}}\over\circ\over{}
\cdots\cdots
\vtriple{\scriptstyle 2}\over\circ\over{}
\kern-1pt\lijntje\kern-1pt\vtriple{\scriptstyle 1}\over\circ\over{}
\kern-4pt{\dlijntje \kern -25pt<}\kern10pt
\vtriple{\scriptstyle 0}\over\circ\over{}\kern-1pt}
\def\Ct{\vtriple{\scriptstyle 2}\over\circ\over{}
\kern-1pt\lijntje\kern-1pt\vtriple{\scriptstyle 1}\over\circ\over{}
\kern-4pt{\dlijntje \kern -25pt<}\kern12pt
\vtriple{\scriptstyle 0}\over\circ\over{}\kern-1pt
}
\def\Bn{\vtriple{\scriptstyle n-1}\over\circ\over{}
\kern-1pt\lijntje\kern-1pt\vtriple{\scriptstyle{n-2}}\over\circ\over{}
\cdots\cdots
\vtriple{\scriptstyle 2}\over\circ\over{}
\kern-1pt\lijntje\kern-1pt\vtriple{\scriptstyle 1}\over\circ\over{}
\kern-4pt{\dlijntje \kern -25pt>}\kern10pt
\vtriple{\scriptstyle 0}\over\circ\over{}\kern-1pt}
\def\Bt{\vtriple{\scriptstyle 2}\over\circ\over{}
\kern-1pt\lijntje\kern-1pt\vtriple{\scriptstyle 1}\over\circ\over{}
\kern-4pt{\dlijntje \kern -25pt>}\kern12pt
\vtriple{\scriptstyle 0}\over\circ\over{}\kern-1pt}
\def\Es{\vtriple{\scriptstyle 6}\over\circ\over{}\kern-1pt\lijntje\kern-1pt
\vtriple{\scriptstyle 5}\over\circ\over{}\kern-1pt\lijntje\kern-1pt
\vtriple{\scriptstyle 4}\over\circ\over{\buildrel
{\scriptstyle 2}\over\vlijn}\kern-1pt\lijntje\kern-1pt
\vtriple{3}\over\circ\over{}\kern-1pt\lijntje\kern-1pt
\vtriple{\scriptstyle 1}\over\circ\over{}\kern-1pt}
\def\Ff{
\vtriple{\scriptstyle 1}\over\circ\over{}
\kern-1pt\lijntje\kern-1pt\vtriple{\scriptstyle 2}\over\circ\over{}
\kern-4pt{\dlijntje \kern -25pt<}\kern10pt
\vtriple{\scriptstyle 3}\over\circ\over{}\kern-1pt\lijntje\kern-1pt
\vtriple{\scriptstyle 4}\over\circ\over{}
\kern-1pt}
\def\Ht{
\vtriple{\scriptstyle 1}\over\circ\over{}
\kern-1pt\overset{5}{\lijntje}\kern-1pt\vtriple{\scriptstyle 2}\over\circ\over{}
\kern-1pt\lijntje\kern-1pt
\vtriple{\scriptstyle 3}\over\circ\over{}\kern-1pt}
\def\Hf{
\vtriple{\scriptstyle 1}\over\circ\over{}
\kern-1pt\overset{5}{\lijntje}\kern-1pt\vtriple{\scriptstyle 2}\over\circ\over{}
\kern-1pt\lijntje\kern-1pt
\vtriple{\scriptstyle 3}\over\circ\over{}\kern-1pt\lijntje\kern-1pt
\vtriple{\scriptstyle 4}\over\circ\over{}
\kern-1pt}
\def\In{
\vtriple{\scriptstyle 0}\over\circ\over{}
\kern-1pt\overset{n}{\lijntje}\kern-1pt\vtriple{\scriptstyle 1}\over\circ\over{}
\kern-1pt}
\def\Gt{
\vtriple{\scriptstyle 0}\over\circ\over{}
\kern-4pt{\tlijntje\kern -25pt<}\kern 10pt\vtriple{\scriptstyle 1}\over\circ\over{}
\kern-1pt}
\def\EBn{\vtriple{\scriptstyle n-1}\over\circ\over{}
\kern-1pt\lijntje\kern-1pt\vtriple{\scriptstyle{n-2}}\over\circ\over{\buildrel
{\scriptstyle -1}\over\vlijn}\cdots\cdots
\vtriple{\scriptstyle 2}\over\circ\over{}
\kern-1pt\lijntje\kern-1pt\vtriple{\scriptstyle 1}\over\circ\over{}
\kern-4pt{\dlijntje \kern -25pt<}\kern8pt
\vtriple{\scriptstyle 0}\over\circ\over{}\kern-1pt}
\def\Cn{\vtriple{\scriptstyle n-1}\over\circ\over{}
\kern-1pt\lijntje\kern-1pt\vtriple{\scriptstyle{n-2}}\over\circ\over{}
\cdots\cdots
\vtriple{\scriptstyle 2}\over\circ\over{}
\kern-1pt\lijntje\kern-1pt\vtriple{\scriptstyle 1}\over\circ\over{}
\kern-4pt{\dlijntje \kern -25pt<}\kern10pt
\vtriple{\scriptstyle 0}\over\circ\over{}\kern-1pt}
\def\ECn{\vtriple{\scriptstyle -2}\over\circ\over{}
\kern-4pt{\dlijntje \kern -25pt>}\kern8pt\vtriple{\scriptstyle n-1}\over\circ\over{}
\kern-1pt\lijntje\kern-1pt\vtriple{\scriptstyle{n-2}}\over\circ\over{}
\cdots\cdots
\vtriple{\scriptstyle 2}\over\circ\over{}
\kern-1pt\lijntje\kern-1pt\vtriple{\scriptstyle 1}\over\circ\over{}
\kern-4pt{\dlijntje \kern -25pt<}\kern12pt
\vtriple{\scriptstyle 0}\over\circ\over{}\kern-1pt}
\def\Fo{\vtriple{\scriptstyle -1}\over\circ\over{}
\kern-1pt\lijntje\kern-1pt
\vtriple{\scriptstyle 1}\over\circ\over{}
\kern-1pt\lijntje\kern-1pt\vtriple{\scriptstyle 2}\over\circ\over{}
\kern-4pt{\dlijntje \kern -25pt<}\kern8pt
\vtriple{\scriptstyle 3}\over\circ\over{}\kern-1pt\lijntje\kern-1pt
\vtriple{\scriptstyle 4}\over\circ\over{}
\kern-1pt}
\def\Ft{
\vtriple{\scriptstyle 1}\over\circ\over{}
\kern-1pt\lijntje\kern-1pt\vtriple{\scriptstyle 2}\over\circ\over{}
\kern-4pt{\dlijntje \kern -25pt<}\kern8pt
\vtriple{\scriptstyle 3}\over\circ\over{}\kern-1pt\lijntje\kern-1pt
\vtriple{\scriptstyle 4}\over\circ\over{}
\kern-1pt\lijntje\kern-1pt
\vtriple{\scriptstyle -2}\over\circ\over{}
\kern-1pt}
\def\Go{\vtriple{\scriptstyle -1}\over\circ\over{}
\kern-1pt\lijntje\kern-1pt
\vtriple{\scriptstyle 0}\over\circ\over{}
\kern-4pt{\tlijntje\kern -25pt<}\kern 12pt\vtriple{\scriptstyle 1}\over\circ\over{}
\kern-1pt}
\def\Gf{
\vtriple{\scriptstyle 0}\over\circ\over{}
\kern-4pt{\tlijntje\kern -25pt<}\kern 12pt\vtriple{\scriptstyle 1}\over\circ\over{}
\kern-1pt\lijntje\kern-1pt
\vtriple{\scriptstyle -2}\over\circ\over{}
\kern-1pt}
\numberwithin{equation}{section}
\newtheorem{lemma}{Lemma}[section]
\newtheorem{cor}[lemma]{Corollary}
\newtheorem{thm}[lemma]{Theorem}
\theoremstyle{remark}
\newtheorem{rem}[lemma]{Remark}
\theoremstyle{definition}
\newtheorem{defn}[lemma]{Definition}
\begin{document}
\title{Coxeter groups and the proper joint spectrums of their faithful representations}
\author{Shoumin Liu\footnote{The author is  funded by the NSFC (Grant No. 11971181, Grant No.11871308) },   Zhaohuan Peng, Xumin Wang}
\date{}
\maketitle

\begin{abstract}
In this paper, we analyze the faithful representations of the dihedral groups, and prove that the Coxeter groups can be determined by the proper joint spectrum of  their faithful representations.
\end{abstract}
Keywords: proper joint spectrum;  Coxeter groups;  dihedral groups;\\
MSC:17B05,17B10\\

\section{Introduction}
\hspace{1.3em}The study of Coxeter groups is a  very classical topic in Lie theory and representation theory, which is related to many subjects in mathematics.
The notion of projective spectrum of finite operators was first defined by Yang in \cite{Y2009}, which has played a powerful role in the study of functional analysis,
group representation theory, Lie algebras, and spectral dynamical systems.
 A lot of  research work about them has been done in \cite{CMK2016}, \cite{GY2017},\cite{GR2014}, \cite{HY2018} and \cite{KY2021}. The proper joint spectrum is a special case of the
 projective spectrum, which can build a bridge between operator theory and geometry. There are some results about the Coxeter groups and the  proper joint spectrums of their generators in
 \cite{CST2021} and \cite{S2022}. In \cite[Theorem 1.1]{CST2021}, the authors prove that a Coxeter group  $W$ can be determined through  the joint spectrum associated to the left regular representation of the group  $W$.
 In the proof of the theorem, the author mainly use  geometric and analytic  tools. Here we want to give a proof of a similar conclusion for the Coxeter groups whose Dynkin diagrams just have finite bonds,
 by the proper joint spectrums associated to any faithful representations of the
 Coxeter groups, and we do it in a pure algebraic approach by analyzing the structure of the faithful representation.

The structure of the paper is as follows.
In Section \ref{sect:BN}, we recall some necessary conceptions for the paper.
In Section \ref{sect:PJSI2n}, we calculate the characteristic polynomial and proper joint spectrum of the irreducible representations of  dihedral groups and summarize the results in $3$ tables;
In Section \ref{sect:FFI2n}, we present the equivalent condition for  a representation $\rho$ of $W(\ddI_{2}(n))$  being a faithful representation through the decomposition of $\rho$ into irreducible representations.
In Section \ref{sect:MT}, we prove our main theorem, a Coxeter group with finite bonds can be determined by  the proper joint spectrum of an arbitrary faithful representation.

\section{Some basic notions}\label{sect:BN}
We first recall the definition of Coxeter groups.
\begin{defn}\label{Cox}
Let $M=(m_{ij})_{1\leq i,j\leq n}$ be a symmetric $n\times n$ matrix with entries from $\mathbb{N}\cup \infty$ such that $m_{ii}=1$ for all $i\in [n]$ and $m_{ij}>1$ whenever $i\neq j$. The Coxeter group of type $M$ is the group
$$W(M)=<s_{1},...,s_{n}|(s_{i}s_{j})^{m_{ij}}=1, i,j\in [n], m_{ij}<\infty>.$$
We often write $S$ instead of $s_{1},...,s_{n}$ and if no confusion is imminent, $W$ instead of $W(M)$. The pair $(W,S)$ is called the Coxeter system of type $M$.
\end{defn}
In this paper, we just consider the Coxeter group with the bond $m_{ij}$ being finite.\\
We also recall some conceptions from \cite{Y2009}.
\begin{defn}
Suppose $A_{1},...,A_{n}$ are bounded linear operators on  a Hilbert space $V$. The $\mathbf{projective}$ $\mathbf{joint}$ $\mathbf{spectrum}$  of $A_{1},...,A_{n}$ is the set
\begin{eqnarray*}
&&\sigma(A_{1},...,A_{n})\\
&=&\{[x_{1}:...:x_{n}]\in \mathbb{CP}^{n}:x_{1}A_{1}+...+x_{n}A_{n}\, \text{is not invertible}\}.
\end{eqnarray*}
The $\mathbf{proper}$ $\mathbf{joint}$ $\mathbf{spectrum}$ of $A_{1},...,A_{n}$ is the set
\begin{eqnarray*}
&&\sigma_p(A_{1},...,A_{n})\\
&=&\{[x_{1},...,x_{n}]\in \mathbb{C}^{n}:x_{1}A_{1}+...+x_{n}A_{n}-I\, \text{is not invertible}\}.
\end{eqnarray*}
\end{defn}

%
%

Let $T=\{s_{1},...s_{n}\}$ be a set of generators of the Coxeter group $W$ associated to the Coxeter diagram of $W$, and let
$$\rho:W\longrightarrow GL(V)$$
be a representation of $W$, with $V$ being a complex linear space of finite dimension.
Then
\begin{eqnarray*}
&& \sigma_{p}(\rho(s_{1}),...,\rho(s_{n}))=\\
&&\{(x_{1},...,x_{n})\in \mathbb{C}^{n}|-I+x_{1}\rho(s_{1})+...+x_{n}\rho(s_{n})\text{ is not  invertible}\}
\end{eqnarray*}
is called  the proper joint spectrum of $(W,\rho)$.

\section{Proper joint spectrum of an irreducible representation of $W(\ddI_{2}(n))$}\label{sect:PJSI2n}

\hspace{1.3em}From \cite[Example 8.2.3]{S2011}, for a finite dihedral group, its non-linear irreducible representations  have been clearly described,  and we can easily find their linear representations by
 its generators and their defining relations through Definition \ref{Cox}.
 In this section, we will focus on calculating the characteristic polynomial and proper joint spectrum corresponding to the irreducible representation of the dihedral group,
and we summarize our results in tables, which can be used in the later sections.

Let $W(\ddI_{2}(n))$ represent the dihedral group of order $2n$. For Definition \ref{Cox}, we set
$$\ddI_{2}(n)=
\begin{pmatrix}
1 & n\\
n & 1
\end{pmatrix},$$
$$Dih_{2n}=W(\ddI_{2}(n))=\{ s_{1},s_{2}|s_{1}^{2}=1,s_{2}^{2}=1,(s_{1}s_{2})^{n}=1\}.$$
Suppose $\rho$ is an irreducible representation of $W(\ddI_{2}(n))$. First, we compute the proper joint spectrum of $(W,\rho)$ defined by  $\mathrm{det}(-I+x_{1}\rho(s_{1})+x_{2}\rho(s_{2}))$.

It is known that, for the irreducible representations of $W(\ddI_{2}(n))$, the degree of $\rho$ is 1 or 2. Suppose $W(\ddI_{2}(n))$ has $m_{1}$ irreducible representations of degree one, and $m_{2}$ irreducible representations of degree two. We have
$$|W(\ddI_{2}(n))|=2n=m_{1}+4m_{2}.$$
We use the symbol $\rho_{i,j}^{n}$ to represent them with $i=1$ or $2$ for the dimension of the representation,
 and  $j=1,...,m_{1}$ when $i=1$, $j=1,...,m_{2}$ when $i=2$.

We need to discuss the parity of $n$.

When $n=2$ namely $s_{1}s_{2}=s_{2}s_{1}$. $W(\ddI_{2}(2))=\mathbb{Z}/(2\mathbb{Z})\times \mathbb{Z}/(2\mathbb{Z})$. $W(\ddI_{2}(2))$ has 4 irreducible representations of dimension $1$ in the following,
and we show them in the picture below.
\begin{enumerate}[(i)]
\item $\rho_{1,1}^{2}(s_{1})=\rho_{1,1}^{2}(s_{2})=1$;

\item $\rho_{1,2}^{2}(s_{1})=\rho_{1,2}^{2}(s_{2})=-1$;

\item $\rho_{1,3}^{2}(s_{1})=1, \rho_{1,3}^{2}(s_{2})=-1$;

\item $\rho_{1,4}^{2}(s_{1})=-1, \rho_{1,4}^{2}(s_{2})=1$.
\end{enumerate}
\begin{tikzpicture}
  \draw [->] (-5,0)--(5,0) node[above,scale=2] {$x$};
  \draw [->] (0,-4)--(0,4) node[right,scale=2] {$y$};
  \draw (-1,0)--(-1,0.1) node[below=3.6pt]{-1};
  \draw (1,0)--(1,0.1) node[below=3.6pt]{1};
  \draw (0,1)--(0.1,1) node[below=3.6pt]{1};
  \draw (0,-1)--(0.1,-1) node[below=3.6pt]{-1};
  \draw[ultra thick](-2,3)--(3,-2);
  \draw[ultra thick](-3,-2)--(2,3);
  \draw[ultra thick](-3,2)--(2,-3);
  \draw[ultra thick](-2,-3)--(3,2);
 \end{tikzpicture}

When $2\nmid n$, there are two representations of dimension $1$ as the  following, and $m_{1}=2$.

\begin{enumerate}[(i)]
\item $\rho_{1,1}^{n}(s_{1})=\rho_{1,1}^{n}(s_{2})=1$;
\item $\rho_{1,2}^{n}(s_{1})=\rho_{1,2}^{n}(s_{2})=-1$.
\end{enumerate}

When $2|n$, there are four representations of dimension 1, and $m_{1}=4$.
\begin{enumerate}[(i)]
\item $\rho_{1,1}^{n}(s_{1})=\rho_{1,1}^{n}(s_{2})=1$;
\item $\rho_{1,2}^{n}(s_{1})=\rho_{1,2}^{n}(s_{2})=-1$;
\item $\rho_{1,3}^{n}(s_{1})=1, \rho_{1,3}^{n}(s_{2})=-1$;
\item $\rho_{1,4}^{n}(s_{1})=-1, \rho_{1,4}^{n}(s_{2})=1$.
\end{enumerate}

When $n$ is an odd number,  we have
$$m_{1}=2,\quad m_{2}=\frac{2n-2}{4}=\frac{n-1}{2}.$$

When $n$ is an even number, $$m_{1}=4,\quad m_{2}=\frac{2n-4}{4}=\frac{n-2}{2}.$$

Next, let us calculate $D_{i,j}=\mathrm{det}[-I+x_{1}\rho_{i,j}^{n}(s_{1})+x_{2}\rho_{i,j}^{n}(s_{2})]$.

When $n$ is an odd number, for the  dimensional representations of dimension $1$, we have
\begin{center}
$D_{1,1}=-1+x_{1}+x_{2}$,
\end{center}
\begin{center}
$D_{1,2}=-1-x_{1}-x_{2}$;
\end{center}

and for the  irreducible representations of dimension $2$,  it follows that
$$\rho_{2,k}^{n}(s_{1})=
\begin{pmatrix}
0 & 1\\
1 & 0
\end{pmatrix}, \quad \rho_{2,k}^{n}(s_{2})=
\begin{pmatrix}
0 & e^{\frac{2\pi ikm}{n}}\\
e^{\frac{-2\pi ikm}{n}} & 0
\end{pmatrix},$$

then
\begin{eqnarray*}
D_{i,j}&=&\mathrm{det}[-I+x_{1}\rho_{i,j}^{n}(s_{1})+x_{2}\rho_{i,j}^{n}(s_{2})]\\
&=&
\begin{vmatrix}
-1 & x_{1}+x_{2}e^{\frac{2\pi ik}{n}}\\
x_{1}+x_{2}e^{\frac{-2\pi ik}{n}} & -1
\end{vmatrix}\\
&=&1-x_{1}^{2}-x_{2}^{2}+2cos\frac{2\pi k}{n} x_{1}x_{2},
\end{eqnarray*}
where $1\leq k\leq \frac{n-1}{2}$.

Similarly, we can deal with the case when $n$ is an even number, and it follows that

\begin{center}
$D_{1,1}=-1+x_{1}+x_{2}$;
\end{center}

\begin{center}
$D_{1,2}=-1-x_{1}-x_{2}$;
\end{center}

\begin{center}
$D_{1,3}=-1+x_{1}-x_{2}$;
\end{center}

\begin{center}
$D_{1,4}=-1-x_{1}+x_{2}$;
\end{center}

\begin{center}
$D_{2,k}=1-x_{1}^{2}-x_{2}^{2}+2cos\frac{2\pi k}{n} x_{1}x_{2}, 1\leq k\leq \frac{n-2}{2}$.
\end{center}
For the general representation $\rho$ for $W(\ddI_{2}(n))$, we use $F_{\rho}^{W(\ddI_{2}(n))}(x_{1},x_{2})$ to represent the equation defining the  proper joint spectrum,
which means that the $\sigma_p(s_1,s_2)$ is defined by
 $$F_{\rho}^{W(\ddI_{2}(n))}(x_{1},x_{2})=det(-I+x_{1}\rho(s_{1})+x_{2}\rho(s_{2}))=0.$$ 
For our aim in the next section,we also compute the kernel and image for each irreducible representation of $W(\ddI_{2}(n))$, which will be presented in the lemma below.

Now we summarize the above results in the  below.
\begin{lemma} \label{tables}
For the finite dihedral group $W(\ddI_{2}(n))$, its proper joint spectrums for irreducible representations can be presented in the following three tables.
Table $1$ is for the case $n=2$; the  Table $2$ is for the case $n>2$, $2\nmid n$, and $1\leq k\leq \frac{n-1}{2}$; the  Table $3$ is for the case  $n>2$, $2|n$, and $ 1\leq k\leq \frac{n-2}{2}$.

\begin{table}[!ht]
\begin{center}
\begin{tabular}{|c|c|c|c|} \hline %
$\rho$ & $kernel$ & $image$ & $F_{\rho}^{W(\ddI_{2}(n))}=0$\\ \hline
$\rho_{1,1}^{2}$ & $W(\ddI_{2}(n))$         & $\left<1\right>$      & $x_{1}+x_{2}-1=0$\\ \hline
$\rho_{1,2}^{2}$ & $\mathbb{Z}/(2\mathbb{Z})=\left<s_{1}s_{2}\right>$ & $\mathbb{Z}/(2\mathbb{Z})$   & $-x_{1}-x_{2}-1=0$\\ \hline
$\rho_{1,3}^{2}$ & $\mathbb{Z}/(2\mathbb{Z})=\left<s_{2}\right>$      & $\mathbb{Z}/(2\mathbb{Z})$   & $-x_{1}+x_{2}-1=0$\\ \hline
$\rho_{1,4}^{2}$ & $\mathbb{Z}/(2\mathbb{Z})=\left<s_{1}\right>$      & $\mathbb{Z}/(2\mathbb{Z})$   & $x_{1}-x_{2}-1=0$\\ \hline
\end{tabular}
\caption{the case for $W(\ddI_{2}(2))$}
\end{center}
\end{table}

\begin{table}[!ht]
\begin{center}
\begin{tabular}{|c|c|c|c|} \hline %
$\rho$ & $kernel$ & $image$ & $F_{\rho}^{W(\ddI_{2}(n))}=0$\\ \hline
$\rho_{1,1}^{n}$ & $W(\ddI_{2}(n))$         & $\left<1\right>$      & $x_{1}+x_{2}-1=0$\\ \hline
$\rho_{1,2}^{n}$ & $\mathbb{Z}/(2\mathbb{Z})=\left<s_{1}s_{2}\right>$ & $\mathbb{Z}/(2\mathbb{Z})$   & $-x_{1}-x_{2}-1=0$\\ \hline
$\rho_{2,k}^{n}$ & $\mathbb{Z}/((n,k)\mathbb{Z})=\left<(s_{1}s_{2})^{\frac{n}{(n,k)}}\right>$      & $W(\ddI_{2}(\frac{n}{(n.k)}))$   & $x_{1}^{2}+x_{2}^{2}+2cos\frac{2\pi k}{n} x_{1}x_{2}-1=0$\\ \hline
\end{tabular}
\caption{cases for $W(\ddI_{2}(n))$, $n$ being odd}
\end{center}
\end{table}

\begin{table}[!ht]
\begin{center}
\begin{tabular}{|c|c|c|c|} \hline %
$\rho$ & $kernel$ & $image$ & $F_{\rho}^{W(\ddI_{2}(n))}=0$\\ \hline
$\rho_{1,1}^{n}$ & $W(\ddI_{2}(n))$         & $\left<1\right>$      & $x_{1}+x_{2}-1=0$\\ \hline
$\rho_{1,2}^{n}$ & $\mathbb{Z}/(n\mathbb{Z})=\left<s_{1}s_{2}\right>$ & $\mathbb{Z}/(2\mathbb{Z})$   & $-x_{1}-x_{2}-1=0$\\ \hline
$\rho_{1,3}^{n}$ & $W(\ddI_{2}(\frac{n}{2}))=\left<s_{2},s_{1}s_{2}s_{1}\right>$      & $\mathbb{Z}/(2\mathbb{Z})$   & $-x_{1}+x_{2}-1=0$\\ \hline
$\rho_{1,4}^{2}$ & $W(\ddI_{2}(\frac{n}{2}))=\left<s_{1},s_{2}s_{1}s_{2}\right>$      & $\mathbb{Z}/(2\mathbb{Z})$   & $x_{1}-x_{2}-1=0$\\ \hline
$\rho_{2,k}^{n}$ & $\mathbb{Z}/((n,k)\mathbb{Z})=\left<(s_{1}s_{2})^{\frac{n}{(n,k)}}\right>$      & $W(\ddI_{2}(\frac{n}{(n.k)}))$   & $x_{1}^{2}+x_{2}^{2}+2cos\frac{2\pi k}{n} x_{1}x_{2}-1=0$\\ \hline
\end{tabular}
\caption{cases for $W(\ddI_{2}(n))$, $n$ being even}
\end{center}
\end{table}

\end{lemma}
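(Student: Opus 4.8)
The plan is to carry out a case analysis over the irreducible representations of $W(\ddI_{2}(n))$, which are listed with explicit matrices above following \cite[Example 8.2.3]{S2011}: the one-dimensional characters $\rho_{1,j}^{n}$ (two of them when $2\nmid n$, four when $2\mid n$) and the two-dimensional representations $\rho_{2,k}^{n}$ with $k$ in the stated range. For each $\rho$ I would compute three things: the polynomial $F_{\rho}^{W(\ddI_{2}(n))}(x_{1},x_{2})=\det(-I+x_{1}\rho(s_{1})+x_{2}\rho(s_{2}))$, the kernel, and the image. For a one-dimensional $\rho_{1,j}^{n}$ each $\rho_{1,j}^{n}(s_{i})$ is a scalar in $\{1,-1\}$, so the $1\times1$ matrix $-I+x_{1}\rho_{1,j}^{n}(s_{1})+x_{2}\rho_{1,j}^{n}(s_{2})$ \emph{is} the scalar $-1+x_{1}\rho_{1,j}^{n}(s_{1})+x_{2}\rho_{1,j}^{n}(s_{2})$, which equals its own determinant; substituting the admissible sign choices produces exactly the linear equations in the last column of the tables. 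The kernel of such a character consists of the words in $s_{1},s_{2}$ using an even number of the generators that are sent to $-1$: it is all of $W(\ddI_{2}(n))$ for the trivial character; it is the rotation subgroup $\langle s_{1}s_{2}\rangle$ for the character sending both $s_{1},s_{2}$ to $-1$; and (only when $2\mid n$) for a character sending exactly one of $s_{1},s_{2}$ to $-1$ it is the index-two subgroup generated by the two involutions among $\{s_{1},s_{2},s_{1}s_{2}s_{1},s_{2}s_{1}s_{2}\}$ that it fixes, whose product has order $n/2$, hence a copy of $W(\ddI_{2}(n/2))$. In each case the image is cyclic of order $|W(\ddI_{2}(n))|/|\ker|$.

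For a two-dimensional $\rho_{2,k}^{n}$ I would take the matrices displayed above,
$$\rho_{2,k}^{n}(s_{1})=\begin{pmatrix}0&1\\1&0\end{pmatrix},\qquad \rho_{2,k}^{n}(s_{2})=\begin{pmatrix}0&e^{\frac{2\pi i k}{n}}\\ e^{\frac{-2\pi i k}{n}}&0\end{pmatrix},$$
so that $\det(-I+x_{1}\rho_{2,k}^{n}(s_{1})+x_{2}\rho_{2,k}^{n}(s_{2}))=1-x_{1}^{2}-x_{2}^{2}+2\cos\frac{2\pi k}{n}\,x_{1}x_{2}$, exactly as computed above, which is the quadratic entry of the tables. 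Next, $\rho_{2,k}^{n}(s_{1}s_{2})=\mathrm{diag}(e^{\frac{-2\pi i k}{n}},e^{\frac{2\pi i k}{n}})$, so $\rho_{2,k}^{n}((s_{1}s_{2})^{j})=I$ precisely when $n\mid jk$, i.e. when $\frac{n}{(n,k)}\mid j$; since a reflection of $W(\ddI_{2}(n))$ is sent to an antidiagonal, hence non-identity, matrix, no reflection lies in the kernel, and therefore $\ker\rho_{2,k}^{n}=\langle (s_{1}s_{2})^{n/(n,k)}\rangle\cong\mathbb{Z}/((n,k)\mathbb{Z})$. Consequently $|\im\rho_{2,k}^{n}|=2n/(n,k)$, and because $\im\rho_{2,k}^{n}$ is generated by the two involutions $\rho_{2,k}^{n}(s_{1}),\rho_{2,k}^{n}(s_{2})$ whose product has order $n/(n,k)$, the natural surjection $W(\ddI_{2}(n/(n,k)))\to\im\rho_{2,k}^{n}$ is an isomorphism.

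The degenerate case $n=2$, where $W(\ddI_{2}(2))\cong\mathbb{Z}/(2\mathbb{Z})\times\mathbb{Z}/(2\mathbb{Z})$ has only the four one-dimensional representations, is covered by the one-dimensional discussion and yields Table~$1$. All the computations are routine; the one point that needs attention in the two-dimensional case is that the stated range $1\le k\le\lfloor\frac{n-1}{2}\rfloor$ (the condition for $\rho_{2,k}^{n}$ to be irreducible, and the $\rho_{2,k}^{n}$ pairwise inequivalent) forces $n/(n,k)\ge 3$, so that the image is honestly a non-degenerate dihedral group $W(\ddI_{2}(n/(n,k)))$ rather than a proper cyclic or Klein-four quotient; beyond that it is only a matter of organising the two parity cases together with the case $n=2$ so that the three tables are filled in consistently.
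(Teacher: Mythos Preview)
Your proposal is correct and follows essentially the same approach as the paper: the paper computes the determinants $D_{i,j}$ explicitly in the discussion preceding the lemma (exactly the computations you outline) and then simply asserts that the kernel and image columns are obtained by a similar direct check, whereas you actually spell out those kernel/image arguments in detail. In particular, your computation of $\ker\rho_{2,k}^{n}$ via $\rho_{2,k}^{n}(s_{1}s_{2})=\mathrm{diag}(e^{-2\pi i k/n},e^{2\pi i k/n})$ and the observation that reflections map to antidiagonal matrices is precisely the routine verification the paper leaves implicit.
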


\section{The faithfulness for a  representation $\rho$ of $W(\ddI_{2}(n))$ }\label{sect:FFI2n}
\hspace{1.3em}Suppose $\rho$ is a finite dimensional representation of $W(\ddI_{2}(n))$, and we will  show  a criterion to determine whether the representation is a faithful
 representation by analyzing the decomposition of this representation.

 Suppose $Irr(W(\ddI_{2}(n)))$ denotes all irreducible representations of $W(\ddI_{2}(n))$. Let $\rho=\bigoplus (\rho_{i})^{t_{i}}$, where $t_{i}$ is the multiplicity of $\rho_{i}\in Irr(W(\ddI_{2}(n)))$.
Let $V_{\rho}^{W(\ddI_{2}(n))}$ be the proper joint spectrum defined by $F_{\rho}^{W(\ddI_{2}(n))}=det(-I+x_{1}\rho(s_{1})+x_{2}\rho(s_{2}))=0$.
The following theorem can hold.

\begin{thm}\label{decomp}
For the representations of  $W(\ddI_{2}(n))$, the following holds.
\begin{enumerate}[(i)]
\item For each irreducible representation $\rho$ of $W(\ddI_{2}(n))$, the set $V_{\rho}^{W(I_{2}(n))}$ is a line or an ellipse,
and $V_{\rho_{_{1}}}^{W(\ddI_{2}(n))}\neq V_{\rho_{2}}^{W(\ddI_{2}(n))}$if $\rho_{1}\neq \rho_{2}\in Irr(W(\ddI_{2}(n)))$.
\item For any finite representation $\rho$ of $W(\ddI_{2}(n))$,
the irreducible component of $V_{\rho}^{W(\ddI_{2}(n))}$ is one to one corresponding to the irreducible representation of $W(\ddI_{2}(n))$ occurring in the decomposition of $\rho$.
\end{enumerate}
\end{thm}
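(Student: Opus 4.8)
The plan is to extract both statements directly from the three tables of Lemma~\ref{tables}, using only the elementary fact that a direct sum of representations produces a block-diagonal operator and hence a factorized defining polynomial. I would not reprove the tables; I would take the entries for $F_{\rho_{i,j}^{n}}^{W(\ddI_2(n))}$ as given and analyze the geometry of the resulting curves.

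\emph{Part (i).} For a one-dimensional irreducible $\rho$ one has $\rho(s_i)\in\{1,-1\}$, so by the tables $F_\rho^{W(\ddI_2(n))}(x_1,x_2)=\pm x_1\pm x_2-1$ is a nonzero affine-linear polynomial, and $V_\rho^{W(\ddI_2(n))}$ is a line. For a two-dimensional irreducible $\rho_{2,k}^n$ the tables give
\[
F_{\rho_{2,k}^n}^{W(\ddI_2(n))}=x_1^2+x_2^2+2\cos\tfrac{2\pi k}{n}\,x_1x_2-1 ,
\]
whose quadratic part has Gram matrix $\left(\begin{smallmatrix}1 & c\\ c & 1\end{smallmatrix}\right)$ with $c=\cos\frac{2\pi k}{n}$ and eigenvalues $1\pm c$. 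The ranges of $k$ appearing in Tables~2 and~3 force $0<\frac{2\pi k}{n}<\pi$ and, in particular, $n\neq 2k$, so $c\in(-1,1)$; hence the quadratic form is positive definite, $V_{\rho_{2,k}^n}^{W(\ddI_2(n))}$ is an ellipse, and its defining polynomial is irreducible (the corresponding projective conic is nondegenerate). For distinctness, the at most four linear polynomials $\pm x_1\pm x_2-1$ occurring in the tables are pairwise non-proportional, and a line cannot equal a conic; two ellipses $V_{\rho_{2,k}^n}^{W(\ddI_2(n))}$ and $V_{\rho_{2,k'}^n}^{W(\ddI_2(n))}$ have defining polynomials monic in $x_1^2$ with constant term $-1$, so they coincide only if $\cos\frac{2\pi k}{n}=\cos\frac{2\pi k'}{n}$, which, since cosine is injective on $(0,\pi)$, forces $k=k'$. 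Since two irreducible polynomials in $\C[x_1,x_2]$ with the same zero set are proportional, $\rho_1\neq\rho_2$ in $Irr(W(\ddI_2(n)))$ implies $V_{\rho_1}^{W(\ddI_2(n))}\neq V_{\rho_2}^{W(\ddI_2(n))}$.

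\emph{Part (ii).} Write $\rho=\bigoplus_i\rho_i^{\,t_i}$ with the $\rho_i\in Irr(W(\ddI_2(n)))$ (complete reducibility over $\C$). Then $-I+x_1\rho(s_1)+x_2\rho(s_2)$ is block diagonal with the block $-I+x_1\rho_i(s_1)+x_2\rho_i(s_2)$ repeated $t_i$ times, so
\[
F_\rho^{W(\ddI_2(n))}=\prod_i\bigl(F_{\rho_i}^{W(\ddI_2(n))}\bigr)^{t_i}.
\]
By Part~(i) each factor $F_{\rho_i}^{W(\ddI_2(n))}$ is irreducible, and the factors belonging to distinct $\rho_i$ are non-proportional; hence the irreducible components of the hypersurface $V_\rho^{W(\ddI_2(n))}=\bigcup_{t_i>0}V_{\rho_i}^{W(\ddI_2(n))}$ are exactly the $V_{\rho_i}^{W(\ddI_2(n))}$ with $t_i>0$, each occurring once, and none contained in another since these are distinct irreducible curves. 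This is precisely the asserted bijection between the irreducible components of $V_\rho^{W(\ddI_2(n))}$ and the irreducible constituents of $\rho$.

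\emph{Main obstacle.} The only step that is not pure bookkeeping is the verification inside Part~(i) that each conic $F_{\rho_{2,k}^n}^{W(\ddI_2(n))}=0$ is genuinely an ellipse: one must check against the exact ranges of $k$ in Tables~2 and~3 that $\cos\frac{2\pi k}{n}$ never equals $\pm 1$ (equivalently $n\nmid k$ and $n\neq 2k$), so that the quadratic form is definite rather than degenerate or indefinite. Once that is settled, everything else follows from the factorization of the determinant over direct summands and the injectivity of cosine on $(0,\pi)$.
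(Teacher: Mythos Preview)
Your argument is correct and follows the same route as the paper: Part~(i) is read off from the tables in Lemma~\ref{tables}, and Part~(ii) uses the block-diagonal factorization $F_\rho^{W(\ddI_2(n))}=\prod_i(F_{\rho_i}^{W(\ddI_2(n))})^{t_i}$ to identify the irreducible components. The paper's own proof of (i) is simply ``can be verified by the tables,'' so your explicit check that $c=\cos\frac{2\pi k}{n}\in(-1,1)$ and that cosine is injective on $(0,\pi)$ just fills in details the paper leaves to the reader.
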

\begin{proof}
The conclusion in (i) can be verified by the tables in Lemma \ref{tables}.

For (ii),  we have  $\rho=\bigoplus \rho_{i}^{t_{_{i}}}$ being its decomposition, $\rho_{i}\in Irr(W(\ddI_{2}(n)))$,
which implies the equation $F_{\rho}^{W(\ddI_{2}(n))}$ determining $V_{\rho}^{W(\ddI_{2}(n))}$ having the decomposition

$$F_{\rho}^{W(\ddI_{2}(n))}=\prod (F_{\rho_{i}}^{W(\ddI_{2}(n))})^{t_{i}}.$$

Therefore  $V_{\rho}^{W(\ddI_{2}(n))}$ has irreducible components $V_{\rho_{1}}^{W(\ddI_{2}(n))},...,V_{\rho_{k}}^{W(\ddI_{2}(n))}$ being  a line or an  ellipse, having nothing to do with
the multiplicities $t_i$s.
\end{proof}

Then the corollary holds for the Theorem \ref{decomp}.

\begin{cor}
Let $\rho,\rho^{'}$ be the finite dimensional  representations of $W(\ddI_{2}(n))$. Then $V_{_\rho}^{W(\ddI_{2}(n))}=V_{\rho^{'}}^{W(\ddI_{2}(n))}$ if and only if
the irreducible representations in $W(\ddI_{2}(n))$ occurring in the decomposition of $\rho$ and $\rho^{'}$ are the same.
\end{cor}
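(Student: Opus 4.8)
The plan is to deduce this corollary directly from Theorem \ref{decomp} together with the uniqueness part of decomposition into irreducibles. The statement to prove is: for finite-dimensional representations $\rho,\rho'$ of $W(\ddI_{2}(n))$, one has $V_{\rho}^{W(\ddI_{2}(n))}=V_{\rho'}^{W(\ddI_{2}(n))}$ if and only if the set of irreducible constituents appearing in $\rho$ equals the set appearing in $\rho'$ (ignoring multiplicities).

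For the ``if'' direction, suppose $\rho$ and $\rho'$ have the same set of irreducible constituents, say $\{\rho_{i_1},\dots,\rho_{i_k}\}\subseteq Irr(W(\ddI_{2}(n)))$. By Theorem \ref{decomp}(ii) (and the displayed factorization $F_{\rho}^{W(\ddI_{2}(n))}=\prod (F_{\rho_i}^{W(\ddI_{2}(n))})^{t_i}$ in its proof), the proper joint spectrum $V_{\rho}^{W(\ddI_{2}(n))}$ is the union $\bigcup_{j=1}^{k} V_{\rho_{i_j}}^{W(\ddI_{2}(n))}$ as a set, since each irreducible factor contributes a line or an ellipse and the multiplicities $t_i$ do not affect the zero set. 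The same description holds for $\rho'$ with the same index set, so the two zero sets coincide.

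For the ``only if'' direction, assume $V_{\rho}^{W(\ddI_{2}(n))}=V_{\rho'}^{W(\ddI_{2}(n))}$. Again by Theorem \ref{decomp}(ii), each side decomposes into irreducible components, each of which is a line or an ellipse corresponding to a distinct irreducible representation, and by Theorem \ref{decomp}(i) distinct irreducible representations give distinct such curves. Hence the irreducible components of the variety $V_{\rho}^{W(\ddI_{2}(n))}$ are in bijection with the irreducible constituents of $\rho$, and likewise for $\rho'$. Since a variety has a well-defined set of irreducible components, the equality $V_{\rho}^{W(\ddI_{2}(n))}=V_{\rho'}^{W(\ddI_{2}(n))}$ forces the two collections of component curves to agree, and therefore — using the injectivity statement of Theorem \ref{decomp}(i) to recover the irreducible representation from its curve — the sets of irreducible constituents of $\rho$ and $\rho'$ coincide.

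The only genuinely delicate point is making sure that distinct lines/ellipses from Theorem \ref{decomp}(i) remain distinct \emph{as irreducible components} of the combined zero set, i.e.\ that no line or ellipse is contained in another; but a line is never contained in an ellipse or another distinct line, and two distinct ellipses (being irreducible conics) cannot contain one another, so the component structure is exactly the list of constituent curves with no collapsing. With that observation the corollary is immediate, so I would keep the write-up to a few lines invoking Theorem \ref{decomp}(i) and (ii).
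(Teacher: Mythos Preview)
Your proposal is correct and matches the paper's approach: the paper gives no separate proof but simply states the corollary as an immediate consequence of Theorem \ref{decomp}, which is exactly what you do by invoking parts (i) and (ii) for the two directions. Your extra paragraph checking that distinct lines and ellipses cannot contain one another is a nice sanity check, but note that Theorem \ref{decomp}(ii) already asserts the one-to-one correspondence between irreducible components and irreducible constituents, so you can safely omit it and keep the write-up to the one or two lines you suggest.
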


For the faithfulness of  $\rho$ for $W(\ddI_{2}(n))$,  we have the following theorem.

\begin{thm}\label{faithful}
The representation $\rho$ of $W(\ddI_{2}(n))$ is a faithful representation  if and only if
the following conditions hold for different $n$.
\begin{enumerate}[(i)]

\item   The representation  $\rho$ has at least 2 of $\rho_{1,2}^{2},\rho_{1,3}^{2},\rho_{1,4}^{2}$ in its irreducible decomposition when $n=2$.

\item  The representation $\rho$ has distinct $\rho_{2,k_{i}}^{n},i=1,...,t$ in its irreducible decomposition with
$((n,k_{1}),...,(n,k_{t}))=1$  when $2\nmid n$.

\item

 The representation $\rho$ has neither $\rho_{1,3}^{n}$ or $\rho_{1,4}^{n}$ in its decomposition, and $\rho$ has $\rho_{2,k_{1}}^{n},...,\rho_{2,k_{t}}^{n}$ with $((n,k_{1}),...,(n,k_{t}))=1$ or
$\rho$ has either $\rho_{1,3}^{n}$ or $\rho_{1,4}^{n}$ in its decomposition, and $\rho$ has $\rho_{2,k_{1}}^{n},...,\rho_{2,k_{t}}^{n}$ with $(2,(n,k_{1}),...,(n,k_{t}))=1$ when $2|n$ .

\end{enumerate}
\end{thm}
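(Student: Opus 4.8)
The strategy is to reduce faithfulness of $\rho$ to a condition on the intersection of the kernels of its irreducible constituents, and then read off that condition from the kernel column of the tables in Lemma \ref{tables}. The starting observation is that for $\rho = \bigoplus \rho_i^{t_i}$ one has $\ker \rho = \bigcap_i \ker \rho_i$, where the intersection ranges only over the distinct irreducible constituents actually occurring (multiplicities are irrelevant, exactly as in Theorem \ref{decomp}(ii)). Hence $\rho$ is faithful if and only if $\bigcap_i \ker \rho_i = \{1\}$. So the whole proof is a case analysis, by the parity of $n$, of when a collection of the normal subgroups listed in Tables $1$, $2$, $3$ has trivial intersection.

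\textbf{Case $n = 2$.} Here $W(\ddI_2(2)) = \Z/2\Z \times \Z/2\Z$ and the four one-dimensional representations have kernels $W$, $\langle s_1 s_2\rangle$, $\langle s_2\rangle$, $\langle s_1\rangle$ respectively. The trivial representation contributes nothing to the intersection, and any single one of the three nontrivial characters has a nontrivial kernel, so one needs at least two of $\rho_{1,2}^2, \rho_{1,3}^2, \rho_{1,4}^2$; conversely any two of the three subgroups $\langle s_1 s_2\rangle, \langle s_2\rangle, \langle s_1\rangle$ already intersect trivially (they are three distinct order-$2$ subgroups of the Klein four-group). This gives (i).

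\textbf{Case $2\nmid n$.} The kernels available are $W$ (trivial rep), $\langle s_1 s_2\rangle$ (the sign rep $\rho_{1,2}^n$), and $\Z/((n,k)\Z) = \langle (s_1 s_2)^{n/(n,k)}\rangle$ for the two-dimensional $\rho_{2,k}^n$. Since $n$ is odd, $\langle s_1 s_2\rangle \cong \Z/n\Z$ contains no element of order $2$ outside... more precisely, every proper normal subgroup containing a reflection is impossible here because all the listed kernels are contained in the cyclic rotation subgroup $\langle s_1 s_2\rangle$ except $W$ itself; so $\rho_{1,2}^n$ never helps (its kernel $\langle s_1 s_2\rangle$ is large), and faithfulness forces us to use the two-dimensional constituents. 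Inside the cyclic group $\Z/n\Z = \langle s_1 s_2\rangle$, the subgroup $\langle (s_1 s_2)^{n/(n,k_i)}\rangle$ has order $(n, k_i)$, and a family of cyclic subgroups of $\Z/n\Z$ of orders $d_i = (n,k_i)$ has trivial intersection if and only if $\gcd(d_1,\dots,d_t) = 1$ (this is the elementary fact that in $\Z/n\Z$ the subgroup of order $d$ is the unique one of that order, and the intersection of the subgroups of orders $d_i$ is the subgroup of order $\gcd(d_i)$). Since $W$ itself is never killed by the rotation subgroup, this is also sufficient: if $\bigcap_i \langle (s_1 s_2)^{n/(n,k_i)}\rangle = \{1\}$ then $\ker\rho = \{1\}$. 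This yields (ii).

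\textbf{Case $2\mid n$.} Now there are four one-dimensional representations, and $\rho_{1,3}^n, \rho_{1,4}^n$ have kernels $W(\ddI_2(n/2)) = \langle s_2, s_1 s_2 s_1\rangle$ and $\langle s_1, s_2 s_1 s_2\rangle$, which are \emph{not} contained in the rotation subgroup; intersecting either of them with $\langle s_1 s_2\rangle$ gives the index-$2$ subgroup $\langle (s_1 s_2)^2\rangle$ of the rotations. So the presence of one of $\rho_{1,3}^n$ or $\rho_{1,4}^n$ relaxes, within the rotation subgroup $\Z/n\Z$, the requirement on the two-dimensional constituents from "generate everything" to "generate the order-$2$ subgroup is enough to be already achieved" --- concretely, the two-dimensional kernels together with $\langle(s_1s_2)^2\rangle$ must intersect trivially, i.e. $\gcd(2, (n,k_1),\dots,(n,k_t)) = 1$; and one checks that $\rho_{1,3}^n$ and $\rho_{1,4}^n$ together already have intersection $\langle (s_1s_2)^2 \rangle$ (not trivial, since $n/2$ is the order of $s_1s_2$ in each quotient only when... ) so one of them alone is as good as both for this purpose, while neither alone nor both together can do it without help from a two-dimensional piece. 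If on the other hand $\rho$ contains neither $\rho_{1,3}^n$ nor $\rho_{1,4}^n$, the only kernels other than $W$ and $\langle s_1s_2\rangle$ are the two-dimensional ones, all inside $\Z/n\Z$, and as in the odd case we need $\gcd((n,k_1),\dots,(n,k_t)) = 1$. Assembling the two subcases gives (iii).

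\textbf{Main obstacle.} The conceptual content is entirely in the reduction $\ker\rho = \bigcap \ker\rho_i$ together with the two elementary group-theoretic facts used above (distinct order-$2$ subgroups of the Klein four-group meet trivially; in $\Z/n\Z$ the intersection of cyclic subgroups of orders $d_i$ is the one of order $\gcd(d_i)$). The genuinely delicate point, and the one I expect to require the most care, is the even case: one must verify exactly how the kernels $\langle s_2, s_1 s_2 s_1\rangle$ and $\langle s_1, s_2 s_1 s_2\rangle$ of $\rho_{1,3}^n,\rho_{1,4}^n$ sit relative to the rotation subgroup and to each other --- in particular that their pairwise intersection is precisely $\langle (s_1 s_2)^2\rangle$ and that this is what produces the "$\gcd(2,\dots)$" rather than "$\gcd(\dots)$" in condition (iii). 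This requires identifying, inside $W(\ddI_2(n))$ with $n$ even, the subgroup $W(\ddI_2(n/2))$ embedded via $\langle s_2, s_1s_2s_1\rangle$ and intersecting it with $\langle s_1s_2\rangle$; once that bookkeeping is done correctly the rest is routine.
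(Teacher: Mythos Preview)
Your approach is essentially identical to the paper's: reduce faithfulness to $\bigcap_i \ker\rho_i = \{1\}$ and then case-analyze using the kernel column of the tables, handling the even case by observing that $\ker\rho_{1,3}^n \cap \langle s_1s_2\rangle = \ker\rho_{1,4}^n \cap \langle s_1s_2\rangle = \langle (s_1s_2)^2\rangle$. One slip to fix in your case $2\mid n$: the subgroup $\langle (s_1s_2)^2\rangle$ has order $n/2$, not $2$, so the intersection-in-$\Z/n\Z$ argument actually yields the condition $\gcd\bigl(\tfrac{n}{2},(n,k_1),\dots,(n,k_t)\bigr)=1$, which is exactly what the paper's proof derives (the ``$2$'' in the theorem statement is a typo for $n/2$; e.g.\ for $n=12$, $k_1=3$ one has $\gcd(2,3)=1$ but $\rho_{1,3}^{12}\oplus\rho_{2,3}^{12}$ is not faithful since $(s_1s_2)^4$ lies in both kernels).
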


\begin{proof}
Suppose $\rho=\rho_{1}^{t_{1}}\oplus \rho_{2}^{t_{2}}\oplus,...\oplus\rho_{k}^{t_{k}}, \rho_{i}\in Irr(W(\ddI_{2}(n)))$,
then $\rho$ is faithful if and only if $\mathrm{ker}(\rho_{1})\cap \mathrm{ker}(\rho_{2})\cap ...\cap \mathrm{ker}(\rho_{k})={1}$.

The case (i) can be verified from Table $1$ in Lemma \ref{tables}.

 Since $W(\ddI_{2}(n))=\left< s_{1},s_{2}|s_{1}^{2}=1,s_{2}^{2}=1,(s_{1}s_{2})^{n}=1\right>$,  write  $r=s_{1}s_{2}$.  Hence the order of $r$ is $n$.

 Let us prove (ii). By Table $2$  in Lemma \ref{tables}, we see that $\mathrm{ker} \rho_{2,k}^{n}\subseteq \mathrm{ker}\rho_{1,2}^{n}\subseteq \mathrm{ker} \rho_{1,1}^{n}$,
and $\mathrm{ker} \rho_{1,2}^{n}\neq \{1\}$. Hence, when $\rho$ is faithful,
$\rho$ must have some $\rho_{2,k}^{n}$ in its decomposition. Suppose for those $2$ dimensional representations,
$\rho$ has $\rho_{2,k_{i}}^{n},i=1,...,t$ in its decomposition. Hence $\mathrm{ker} \rho_{2,k_{i}}^{n}=<r^{\frac{n}{(n,k_{i})}}>$ by Table $2$  in Lemma \ref{tables}. Therefore, it follows that
$$\mathrm{ker} \rho=\mathrm{ker} \rho_{2,k_{1}}^{n}\cap \mathrm{ker}\rho_{2,k_{2}}^{n}\cap...\cap \mathrm{ker} \rho_{2,k_{t}}^{n}=\bigcap\left<r^{\frac{n}{(n,k_{i})}}\right>.$$
Since $\left<r^\frac{n}{(n,k_{i})}\right>$ is a cyclic subgroup in $\left<r\right>$ of order $(n,k_{i})$, when $\rho$ is faithful, it is equivalent to $((n,k_{1}),...,(n,k_{t}))=1$ or $(k_{1},...,k_{t},n)=1$.

Now we prove (iii). For the case $1$ of (iii), the argument is similar to the proof of (ii).
For the case $2$ of (iii), it is known that
\begin{center}
$\left<r^{2}\right>=\mathrm{ker} \rho_{1,3}^{n} \bigcap \left<r\right>=\mathrm{ker} \rho_{1,4}^{n} \bigcap \left<r\right> \subseteq \mathrm{ker} \rho_{1,2}^{n}\subseteq \mathrm{ker} \rho_{1,1}^{n}$.
\end{center}
When $\rho$ has exact two dimensional irreducible representation $\rho_{2,k_{i}}^{n},i=1,...,t$ in its decomposition, we see that
\begin{center}
$\mathrm{ker} \rho=\left<r^{2}\right>\bigcap (\bigcap \mathrm{ker} \rho_{2,k_{i}}^{n})=\left<r^{2}\right>\bigcap \left(\bigcap\left<r^{\frac{n}{(n,k_{i})}}\right>\right)$.
\end{center}
Therefore, similarly to the argument in (ii), it follows that $\rho$ is irreducible if and only if
$(\frac{n}{2},(n,k_{1}),...,(n,k_{t}))=1$, namely $(\frac{n}{2},k_{1},...,k_{t})=1$.
\end{proof}

\section{Main theorem}\label{sect:MT}

\hspace{1.3em}Compared with \cite[Theorem 1.1]{CST2021}, for the general Coxeter groups without infinite bonds in their Coxeter diagrams, we will prove the new version of the theorem through a faithful representation.

\begin{thm}\label{main thm}
Let $W$ be a Coxeter group with generators $\{s_{1},...,s_{n}\}$ associated to its coxeter digram without infinite bonds,
and $\rho$ be a faithful representation of $W$. If the proper joint spectrum $U$ relative to $\{s_{1},...,s_{n}\}$ of $\rho$ is known, then the Coxeter group can be determined by the set $U$.
\end{thm}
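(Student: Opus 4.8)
The plan is to reconstruct the Coxeter matrix $M = (m_{ij})$ of $W$ entry by entry from the set $U \subseteq \mathbb{C}^n$, since $W$ is determined by $M$ up to isomorphism. The key observation is that $U = \sigma_p(\rho(s_1),\dots,\rho(s_n))$ is the zero locus of $F_\rho^W(x_1,\dots,x_n) = \det(-I + x_1\rho(s_1)+\cdots+x_n\rho(s_n))$, and for each pair $\{i,j\}$ we can recover the "rank-two slice" of this spectrum by intersecting $U$ with the coordinate subspace $\{x_k = 0 : k \neq i,j\}$. On that slice the operators $\rho(s_i),\rho(s_j)$ generate a representation of the parabolic subgroup $\langle s_i, s_j\rangle \cong W(\ddI_2(m_{ij}))$ (a dihedral group, using that all bonds are finite), and the slice of $U$ is exactly $V_{\rho|_{\langle s_i,s_j\rangle}}^{W(\ddI_2(m_{ij}))}$.

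First I would fix a pair $\{i,j\}$ and restrict $\rho$ to the dihedral parabolic $W_{ij} := \langle s_i, s_j\rangle$. Since $\rho$ is faithful on $W$, its restriction is faithful on $W_{ij}$. Decomposing $\rho|_{W_{ij}}$ into irreducibles and invoking Theorem \ref{decomp}(ii), the slice $U \cap \{x_k = 0, k\neq i,j\}$ is a union of lines and ellipses, one irreducible component per irreducible constituent of $\rho|_{W_{ij}}$; by Theorem \ref{decomp}(i) these components are pairwise distinct, so the decomposition of $\rho|_{W_{ij}}$into irreducibles is recovered from the geometry of the slice. By Theorem \ref{faithful}, faithfulness of $\rho|_{W_{ij}}$ forces at least one $2$-dimensional constituent $\rho_{2,k}^{m_{ij}}$ to appear (when $m_{ij}>2$), whose component is an ellipse $x_i^2 + x_j^2 + 2\cos\frac{2\pi k}{m_{ij}}\,x_ix_j - 1 = 0$; and when $m_{ij}=2$ the slice consists only of lines with the specific coefficient pattern from Table $1$. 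The next step is to read off $m_{ij}$ from these data: an ellipse component determines $\cos\frac{2\pi k}{m_{ij}}$, hence the rational number $\frac{k}{m_{ij}}$ in lowest terms, hence a divisor $d = m_{ij}/(m_{ij},k)$ of $m_{ij}$; collecting all ellipse components in the slice gives several such divisors, and by the $\gcd$ conditions in Theorem \ref{faithful}(ii)--(iii) their least common multiple is exactly $m_{ij}$ (adjusting by the factor $2$ when a reflection-type line $-x_i + x_j - 1 = 0$ or $x_i - x_j - 1 = 0$ is present, corresponding to $\rho_{1,3}$ or $\rho_{1,4}$). The degenerate case is $m_{ij}=2$: here the slice contains no ellipse, only lines, and conversely the presence of any ellipse certifies $m_{ij}>2$; so $m_{ij}=2$ is detected by absence of ellipses, and in that case $\{s_i,s_j\}$ commute.

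Carrying this out for every pair $\{i,j\}$ recovers all off-diagonal entries $m_{ij}$, hence the full matrix $M$, hence $W = W(M)$ up to isomorphism, which is the assertion. The main obstacle I anticipate is the bookkeeping in the last step: extracting $m_{ij}$ from the \emph{set} of ellipse components requires knowing that the $\gcd$ constraints of Theorem \ref{faithful} are not only necessary for faithfulness but also pin down $m_{ij}$ as the lcm of the recovered divisors $m_{ij}/(m_{ij},k_\ell)$ — one must check that no smaller dihedral group produces the same collection of cosines, and handle the even-$n$ subtlety where $\rho_{1,3}^n$ or $\rho_{1,4}^n$ contributes an extra factor of $2$ to the relevant $\gcd$. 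A secondary point requiring care is justifying that the restriction of the representation to a parabolic, and the corresponding slicing of $U$, behaves as claimed — i.e. that setting the extra coordinates to zero in $F_\rho^W$ yields precisely $F_{\rho|_{W_{ij}}}^{W(\ddI_2(m_{ij}))}$ up to a nonzero scalar — which follows from the block structure of $x_i\rho(s_i)+x_j\rho(s_j)$ relative to the isotypic decomposition under $W_{ij}$, but should be stated explicitly.
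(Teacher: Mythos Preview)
Your proposal is correct and follows essentially the same route as the paper: slice $U$ by the coordinate plane $V_{ij}$, use the faithfulness of the restriction to the dihedral parabolic $\langle s_i,s_j\rangle$, identify the irreducible components of the slice as lines and ellipses via Theorem~\ref{decomp}, and recover $m_{ij}$ as the least common multiple of the denominators of the ellipse cosines, with the factor-of-$2$ correction when a line of the form $\pm x_i\mp x_j-1=0$ is present (the paper organizes this into three cases, but the content is the same). Your anticipated ``obstacle'' about the slicing is in fact trivial---setting $x_k=0$ for $k\neq i,j$ in $\det(-I+\sum_\ell x_\ell\rho(s_\ell))$ literally yields $\det(-I+x_i\rho(s_i)+x_j\rho(s_j))$, no block decomposition needed---and the lcm/gcd bookkeeping you flag is exactly what the paper carries out using Theorem~\ref{faithful}.
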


\begin{proof}
Take 2 generators $s_{i},s_{j}$ of $W$. The Theorem is equivalent to prove that the $m_{ij}=\mathrm{ord}(s_{i}s_{j})$ is determined by the set $U$.

Since $\rho$ is a faithful representation of $W$, we have $\rho$ is also a faithful representation of the dihedral group generated by $s_{i}$ and $s_{j}$.

Now, let $V_{ij}=\{(x_{1},...,x_{n})\in \C^{n}|x_{k}=0$ if $k\neq i$ or $j$\} and $U_{ij}=U\bigcap V_{ij}$.

By Theorem \ref{faithful}, we divide our argument into $3$ cases.

Case 1: When the set $U_{ij}$ consists of lines only, then by Theorem 2, we must have $m_{ij}=2$.

Case 2: When the $U_{ij}$ consists of some ellipses $E_{1},...,E_{t}$ and lines in $\{x_{i}+x_{j}-1=0, -x_{i}-x_{j}-1=0\}$.
Suppose $E_{j}$ is defined by the equation $x_{1}^{2}+x_{2}^{2}+2cos\frac{2\pi n_{j}}{m_{j}}x_{1}x_{2}-1=0$ for $j=1,...,t$
with $0<\frac{n_{j}}{m_{j}}<\frac{1}{2}, (n_{j},m_{j})=1$. By Table $2$ in Lemma \ref{tables},
we see $E_{j}$ is corresponding to an irreducible representation of $\left<s_{i},s_{j}\right>$ with kernel $\left<(s_{i}s_{j})^{m_{j}}\right>$.
By (ii) of Theorem \ref{faithful} and  the first case  of (iii) of Theorem \ref{faithful}, when $\rho$ is faithful, we have
 \begin{center}
 $(\frac{m_{ij}}{m_{1}},...,\frac{m_{ij}}{m_{t}})=1$,
\end{center}
and then $m_{ij}$ is the least common multiple of $m_{1},...,m_{t}$.

Case 3: When $U_{ij}$ consists of some ellipses $E_{1},...,E_{t}$ and lines in $\{x_{i}-x_{j}-1=0, x_{j}-x_{i}-1=0\}$. Therefore the order $m_{ij}$ must be even.

We suppose $E_{j}$ is in the form of case 2 in the above, by the second case  of (iii) of Theorem \ref{faithful}, it follows that
$$(\frac{m_{ij}}{2},\frac{m_{ij}}{m_{1}},...,\frac{m_{ij}}{m_{t}})=1.$$
Therefore, we suppose $\theta$ is the least common multiple of $m_{1},...,m_{t}$. When $2\nmid \theta$,  then we have
$m_{ij}=2\theta$; when $2|\theta$, it implies that  $m_{ij}=\theta$.
\end{proof}

	\begin{rem}
	From this paper, we observe that for Coxeter groups, the generating relations of the group, its representations, and their characteristic polynomials mutually determine one another. This establishes a trinity of unification among Coxeter groups, their faithful representations, and their geometric realizations. A natural question arises: what is the relationship between the representations of the Hecke algebra associated with a Coxeter group and the characteristic polynomials of these representations? Is there also a one-to-one correspondence? This remains an unresolved issue in our research.
	Moreover, while characteristic polynomials geometrically encapsulate groups or algebras along with their representations, what profound connections exist between characteristic polynomials and Kazhdan-Lusztig polynomials? Could this provide a powerful tool for studying the cells in Coxeter groups and exploring relations among Coxeter groups, intersection cohomology theory, and K-theory? These questions merit deep and sustained investigation.
\end{rem}

Shoumin Liu\\
Email: s.liu@sdu.edu.cn\\
School of Mathematics, Shandong University\\
Shanda Nanlu 27, Jinan, \\
Shandong Province, China\\
Postcode: 250100\\
Zhaohuan Peng\\
Email: 1391994462@qq.com\\
School of Mathematics, Shandong University\\
Shanda Nanlu 27, Jinan, \\
Shandong Province, China\\
Postcode: 250100\\
Xumin Wang\\
Email: 202320303@mail.sdu.edu.cn\\
School of Mathematics, Shandong University\\
Shanda Nanlu 27, Jinan, \\
Shandong Province, China\\
Postcode: 250100

\end{document}